\numberwithin{equation}{section}
\newtheorem{theorem}{Theorem}[section]
\newtheorem{lemma}[theorem]{Lemma}
\theoremstyle{definition}
\newtheorem{remark}[theorem]{Remark}
\numberwithin{equation}{section}
\newcommand{\bi}{\begin{itemize}}
\newcommand{\ei}{\end{itemize}}
\newcommand{\ben}{\begin{enumerate}}
\newcommand{\een}{\end{enumerate}}
\newcommand\be{\begin{equation}}
\newcommand\ee{\end{equation}}
\newcommand\bea{\begin{eqnarray}}
\newcommand\eea{\end{eqnarray}}
\newcommand\beaa{\begin{eqnarray*}}
\newcommand\eeaa{\end{eqnarray*}}
\newcommand\bef{\begin{frame}}
\newcommand\enf{\end{frame}}
\renewcommand{\phi}{\varphi}
\newcommand{\pf}{\noindent{\bf Proof. }}
\newcommand{\baa}{\begin{array}}
\newcommand{\eaa}{\end{array}}
\newcommand\ld{{\lambda}}
\def\epsilon{\varepsilon}
\begin{document}

%%%----------------------------------------------------------%%%%%%%
%
%
\title[MEMS with Robin BC]{Bifurcation diagram of a Robin boundary value problem arising in MEMS}

\author[J.-S. Guo]{Jong-Shenq Guo}
\address{Department of Mathematics, Tamkang University, Tamsui, New Taipei City 251301, Taiwan}
\email{jsguo@mail.tku.edu.tw}

\author[N.I. Kavallaris]{Nikos I. Kavallaris}
\address{Department of Mathematical and Physical Sciences, University of Chester, Thornton Science Park, Pool Lane, Ince, Chester CH2 4NU, UK}
\email{n.kavallaris@chester.ac.uk}

\author[C.-J. Wang]{Chi-Jen Wang}
\address{Department of Mathematics, National Chung Cheng University, Min-Hsiung, Chia-Yi 62102, Taiwan }
\email{cjwang@ccu.edu.tw}

\author[C.-Y. Yu]{Cherng-Yih Yu}
\address{Department of Mathematics, Tamkang University, Tamsui, New Taipei City 251301, Taiwan}
\email{cherngyi@mail.tku.edu.tw}

\thanks{Date: \today. Corresponding Author: C.-J. Wang}

\thanks{{\em 2000 Mathematics Subject Classification.} Primary: 34A34, 34B15; Secondary: 35K20, 35K55.}

\thanks{{\em Key words and phrases:} stationary solution; bifurcation diagram; Robin boundary condition; micro-electro mechanical system; pull-in voltage.}

\begin{abstract}
{We consider a parabolic problem with Robin boundary condition which arises when the edge of a micro-electro-mechanical-system (MEMS) device is connected with a flexible nonideal support.
Then via a rigorous analysis we investigate the structure of the solution set of the corresponding steady-state problem. We show that a critical value (the pull-in voltage) exists so that the system has
exactly two stationary solutions when the applied voltage is lower than this critical value,
one stationary solution for applying this critical voltage, and no stationary solution above
the critical voltage.

}
\end{abstract}

\maketitle
\setlength{\baselineskip}{18pt}

%%%%%%%%%%%%%%%%%%%%%%%%%%%%%%%%%%%%%%%%%%%%
\section{Introduction}
\setcounter{equation}{0}

In this paper, we study the deformation of an elastic membrane inside a micro-electro mechanical system (MEMS).
We consider the case when the distance between the plate and the membrane is relative small compared to the length of the device.
In the case when we ignore the inertia and the device is embedded in an electric circuit with/without a capacitor,
the equation describing the operation of the MEMS is reduced to the following single parabolic equation
\be\label{mems}
u_t=\Delta u+ \frac{{\lambda}}{(1-u)^2[1+\alpha\int_\Omega(1-u)^{-1}dx]^2},\; x\in\Omega,\, t>0,
\ee
where $\ld$ is a positive constant which proportional to the square of the applied voltage.
{Besides $\alpha$ is a nonnegative parameter related to the ratio of the applied capacitances; $\alpha=0$ corresponds to a MEMS device without a capacitor,
whilst for $\alpha>0$ a capacitor is connected in series with the device.}
$\Omega$ is the domain of the plate and $u=u(x,t)$ denotes the displacement of the membrane towards the plate.
{It is also assumed that the gap between the elastic membrane and the rigid plate is small compared to the dimensions of the elastic membrane.}

Due to the support of MEMS device might be nonideal and flexible, %other than the initial profile
%\be\label{ibc}
%u(x,0)=u_0(x),\;x\in\bar{\Omega},
%\ee
we consider the edge of the membrane is connected with a flexible nonideal support so that
the following Robin boundary condition is imposed
\be\label{rbc}
\frac{\partial u}{\partial\nu}+\beta u=0\quad\mbox{on $\partial\Omega$},
\ee
where $\nu$ is the unit outer normal of $\partial\Omega$ and $\beta$ is a given positive constant.
For the derivation of this model, we refer the reader to \cite{DKN} (see also \cite{PT01,PB02,PD05,KS18}).

We are interested in the structure of stationary solutions of this MEMS problem.
We shall only focus on the case when the device is one-dimensional, namely the plate  is a rectangle and the deformation $u$ only depends on the horizontal direction.
Without loss of generality we may assume that $\beta=1$ and $\Omega=(-1,1)$.
Therefore, we study the following boundary value problem for $U(x)$:
\bea
&&-U''(x)=\frac{{\lambda}}{[1-U(x)]^2\{1+\alpha\int_{-1}^{1}[1-U(y)]^{-1}dy\}^2},\quad x\in(-1,1),\label{ode}\\
&&\pm U'(\pm 1)+U(\pm 1)=0.\label{bc}
\eea
Our main theorem on the bifurcation diagram of problem \eqref{ode}-\eqref{bc} in this paper reads

\begin{theorem}\label{th:main}
For each $\alpha\ge 0$, there exists a finite positive constant $\ld^*=\ld^*(\alpha)$ such that problem \eqref{ode}-\eqref{bc} has exactly two solutions when $\ld\in(0,\ld^*)$,
a unique solution when $\ld=\ld^*$, and no solution when $\ld>\ld^*$.
\end{theorem}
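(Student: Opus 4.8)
The plan is to convert \eqref{ode}--\eqref{bc} into a one–parameter shooting problem and to read off the number of solutions from the shape of a single function of the amplitude $a:=\|U\|_\infty$.

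\emph{Step 1 (symmetry).} At any solution $U$ the nonlocal factor is a fixed positive constant, so $U$ solves a \emph{local} problem $-U''=\mu(1-U)^{-2}$ on $(-1,1)$ with \eqref{bc}, for some $\mu>0$ (with $\mu=\lambda$ when $\alpha=0$). Since the right–hand side is positive, $U$ is strictly concave and $0<U<1$, so it has a unique maximum $a=U(x_0)\in(0,1)$. I would use the first integral $\tfrac12(U')^2=\mu[F(a)-F(U)]$ with $F(s):=(1-s)^{-1}$ on each side of $x_0$: inserting \eqref{bc} shows that $U(-1)$ and $U(1)$ both solve $\tfrac12 t^2=\mu[F(a)-F(t)]$, whose two sides differ by a strictly increasing function of $t\in[0,a)$; hence $U(-1)=U(1)$, and then equality of the two quadrature integrals forces $x_0=0$. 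So every solution is even and it suffices to work on $[0,1]$ with $U'(0)=0$.

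\emph{Step 2 (reduction to one parameter).} For a solution $U$ put $I[U]:=\int_{-1}^1(1-U)^{-1}\,dx$, so $\mu=\lambda(1+\alpha I[U])^{-2}$. I would then use the scaling invariance of $v''=v^{-2}$: let $w$ be the universal profile with $w''=w^{-2}$, $w(0)=1$, $w'(0)=0$, and $N(\tau):=\tau w'(\tau)+w(\tau)$. The ansatz $1-U(x)=w(\tau x)/N(\tau)$ satisfies \eqref{bc} identically, solves the local equation with $\mu=\Lambda(\tau):=\tau^2/N(\tau)^3$, and has $\|U\|_\infty=1-1/N(\tau)$; conversely, by uniqueness for the initial value problem every solution of the local equation is of this form for a unique $\tau>0$. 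Since $N$ increases strictly from $1$ to $\infty$, $\tau\mapsto a$ is a bijection of $(0,\infty)$ onto $(0,1)$. With $I(\tau)=\tfrac{2N(\tau)}{\tau}\int_0^\tau w^{-1}$, it follows that $U$ solves \eqref{ode}--\eqref{bc} at voltage $\lambda$ iff $U$ is the above profile for some $\tau>0$ with $\lambda=\lambda(\tau):=\Lambda(\tau)\bigl(1+\alpha I(\tau)\bigr)^2$. Thus the whole bifurcation set is the graph $\tau\mapsto(\lambda(\tau),a(\tau))$, two distinct $\tau$'s giving two genuinely different solutions (different amplitudes), and Theorem \ref{th:main} reduces to: $\lambda$ is continuous on $(0,\infty)$, tends to $0$ at both ends, and is strictly increasing then strictly decreasing.

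\emph{Step 3 (endpoints).} As $\tau\to0^+$: $N(\tau)\to1$, $I(\tau)\to2$, $\Lambda(\tau)\sim\tau^2$, so $\lambda(\tau)\to0$ — the small, nearly linear branch. As $\tau\to\infty$ the membrane touches down: $w(\tau)\sim\sqrt2\,\tau$ gives $N(\tau)\sim2\sqrt2\,\tau$ and $\Lambda(\tau)=O(1/\tau)$, while $I(\tau)\sim 4\log\tau$ grows only logarithmically, so again $\lambda(\tau)\to0$. Being continuous and positive on $(0,\infty)$, $\lambda$ then attains a finite positive maximum $\lambda^*=\lambda^*(\alpha)$ at some $\tau^*$.

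\emph{Step 4 (exact multiplicity — the main obstacle).} It remains to show that $(\log\lambda)'$ vanishes exactly once on $(0,\infty)$, changing sign from $+$ to $-$; granted this, $\lambda(\tau)=\lambda$ has two roots for $\lambda\in(0,\lambda^*)$, one for $\lambda=\lambda^*$ and none for $\lambda>\lambda^*$, which is the theorem. Using $w''=w^{-2}$ and the energy identity $(w')^2=2(1-1/w)$ to express everything through $w$ — for instance, $(\log\Lambda)'=0$ is equivalent to $2w=4\tau w'+3\tau^2 w^{-2}$ — and then the substitution $w=\cosh^2\theta$, the equation $(\log\lambda)'=0$ becomes the vanishing of an explicit elementary function of $\theta\in(0,\infty)$, and one must prove it has a unique zero. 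Concretely this means establishing (i) that the local time–map $\Lambda(\tau)$ is unimodal (the Robin analogue of the classical single–fold shape of the one–dimensional MEMS bifurcation curve), and (ii) that the increasing but only logarithmically growing factor $(1+\alpha I(\tau))^2$ does not create further critical points. I expect (i)--(ii) to be the hard part: they should require sharp pointwise estimates on $w$, $w'$ and $\int_0^\tau w^{-1}$, most conveniently packaged as a differential inequality for the candidate sign–changing function.
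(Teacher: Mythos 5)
Your Steps 1--3 are correct and, in substance, coincide with the paper's setup in a different guise: with your universal profile one has $a=1/N(\tau)$ and $b=w(\tau)/N(\tau)$, so your shooting parameter $\tau$ and the paper's parameter $s=b/a$ are related by $s=w(\tau)$, and your closing remark about $w=\cosh^2\theta$ is precisely why the paper's formulas involve $A(s)=\ln(\sqrt{s}+\sqrt{s-1})$. The symmetry argument, the one-parameter reduction with the Robin condition built into $N(\tau)=\tau w'(\tau)+w(\tau)$, and the endpoint asymptotics $\lambda(\tau)\to0$ as $\tau\to0^+$ and $\tau\to\infty$ are all sound, and they correctly reduce Theorem~\ref{th:main} to the unimodality of a single function $\lambda(\tau)$.

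The genuine gap is Step 4, which you state as a plan rather than prove, and it is exactly where the content of the theorem lies. What Steps 1--3 alone deliver is: no solution for $\lambda>\lambda^*$, at least one solution at $\lambda=\lambda^*$, and (by the intermediate value theorem on either side of a maximizer) at least two solutions for $\lambda\in(0,\lambda^*)$; the assertions ``exactly two'' and ``unique at $\lambda=\lambda^*$'' require showing that $\lambda'$ vanishes exactly once, which you do not establish. The paper does this by writing $\lambda(s)=[C(s)]^2/\{2[D(s)]^3\}$ explicitly (formula \eqref{n-lds}), observing that $\lambda'(s)=0$ is equivalent to $P_\alpha(s):=E(s)+\alpha F(s)=0$ with $E$ and $F$ independent of $\alpha$, and proving in Lemma~\ref{inc} that both $E$ and $F$ are strictly increasing on $(1,\infty)$ via elementary bounds such as $A(s)<\sqrt{s-1}<\sqrt{s(s-1)}$; since $P_\alpha(1^+)=-\infty$ and $P_\alpha(+\infty)=+\infty$, there is a unique critical point for every $\alpha\ge0$ at once. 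This simultaneous treatment of the local time map and the nonlocal factor is precisely what disposes of your worries (i) and (ii); your differential-inequality strategy may well be workable along the same lines (your $\theta$ is the paper's $A(s)$), but as written the exact-multiplicity part of the theorem remains unproved.
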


Theorem~\ref{th:main} includes both the local ($\alpha=0$) and nonlocal ($\alpha>0$) cases. It provides the existence of the supremum $\lambda^*$ of {the spectrum of}  problem \eqref{ode}-\eqref{bc}.
The existence of a finite $\lambda^*$ ({\it {pull-in} voltage} in MEMS terminology) is vital,
since the solution of problem {\eqref{mems}-\eqref{rbc}} supplemented with an initial condition for any nonnegative initial profile quenches for $\lambda>\lambda^*$ (cf. \cite{DKN}).
The case when the boundary of the membrane is clamped so that \eqref{rbc} is replaced by the zero Dirichlet boundary condition was
studied extensively. We refer the reader to,  e.g., \cite{L89,GGy07,EG08,GHW09,GK12,KLN16,KS18}.  See also \cite{GPW06,FMPS07,GzW08}.
Little work was done for the case with Robin boundary conditions (see, e.g., \cite{G91,DKN}).
In particular, in \cite{DKN}, the bifurcation diagram of problem \eqref{ode}-\eqref{bc} was given, but only numerically.
The main purpose of this work is to derive rigorously the bifurcation diagram of problem \eqref{ode}-\eqref{bc}.

In the sequel, we let $U=U(x;\lambda)$ be a (classical) solution of \eqref{ode}-\eqref{bc} for a given $\lambda>0$.
It is more convenient to use the new dependent variable $w(x):=1-U(x)$. Then $w$ satisfies
\bea
&&w''(x)=\frac{{\ld}}{w^2(x)\left[1+\alpha\int_{-1}^1 {w^{-1}(y)}dy\right]^{2}},\; -1<x<1,\label{w-eq}\\
&&\pm w'(\pm 1)=1-w(\pm 1).\label{wbc}
\eea
Since $u$ is the deformation of the membrane and $U$ is classical, we have $0<U<1$ in $[-1,1]$ and $U$ is strictly concave {due to the right hand of \eqref{ode} is positive.}
It also follows that $w$ is strictly convex and $0<w<1$ in $[-1,1]$.
In fact, both $U$ and $w$ are symmetric with respect to $x=0$ (see Lemma~\ref{sym} in \S2).
%One should notice that the only constant solution of \eqref{w-eq}-\eqref{wbc} is $w\equiv 1$
%and this occurs if and only if $\ld=0$. From now on, we shall always assume that $\ld>0$.

Set $a:=w(0)$  and $b:=w(1)$ for a solution $w$ of \eqref{w-eq}-\eqref{wbc}.
To study the bifurcation diagram, the first task is to derive some relations between $a$, $b$ and $\ld$.
Actually, this was done in \cite{DKN} (see, in particular, (2.8)-(2.10) in \cite{DKN}).
However, these only give some implicit relations between those three parameters.
It is not clear, in particular, whether $\ld$ is a function of another single parameter (either $a$ or $b$).
By introducing a new parameter, namely $s:=b/a$, we found that all parameters $a$, $b$ and $\ld$ are functions of this single variable $s$.
It turns out that $\ld$ is a function of $a$ (but not $b$), since $a$ is (and $b$ is not) a strictly monotone function of $s$  (see Figure~\ref{Fig1} and \S2 below).
With {this piece of information in hand}, we then derive the one-to-one correspondence between $\{(a,\ld(a))\mid a\in(0,1)\}$ and stationary solutions of \eqref{w-eq}-\eqref{wbc} in \S2.
Then our main result, Theorem~\ref{th:main}, is proved in \S3, by some delicate analysis of the derivative $\ld'(s)$.
In addition, by the proof of Theorem~\ref{th:main}, the value of $\ld^*$ is directly obtained.
In particular, we derive that   $\ld^*\approx 0.10871$ when $\alpha=0$ and $\ld^*\approx 2.38709$ when $\alpha=1$.

 %%%%%%%%%%%%%%%%%%%%%%%%%%%%%
\begin{figure}
  \centering
\includegraphics[scale=0.55]{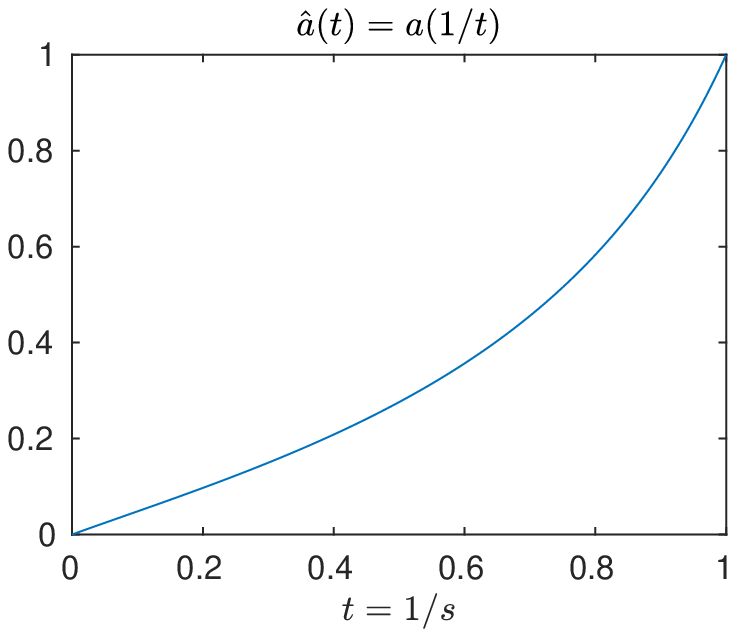}
\includegraphics[scale=0.55]{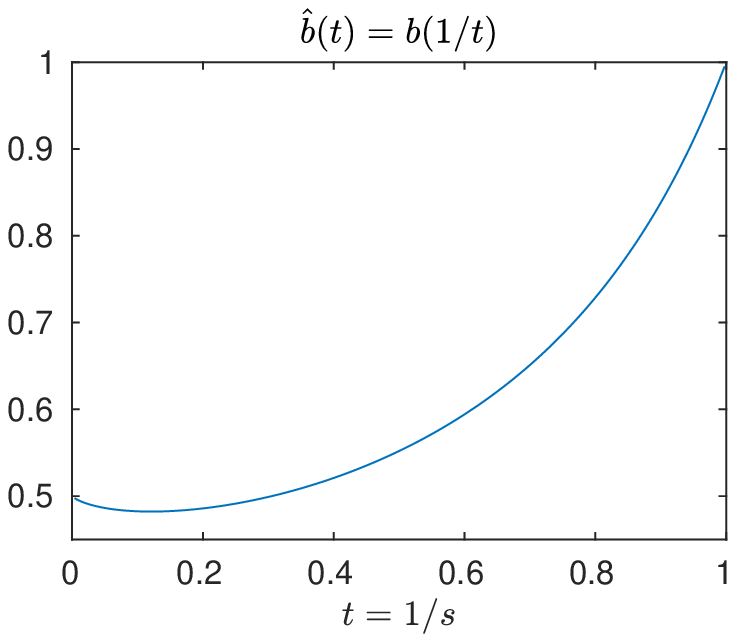}
\includegraphics[scale=0.55]{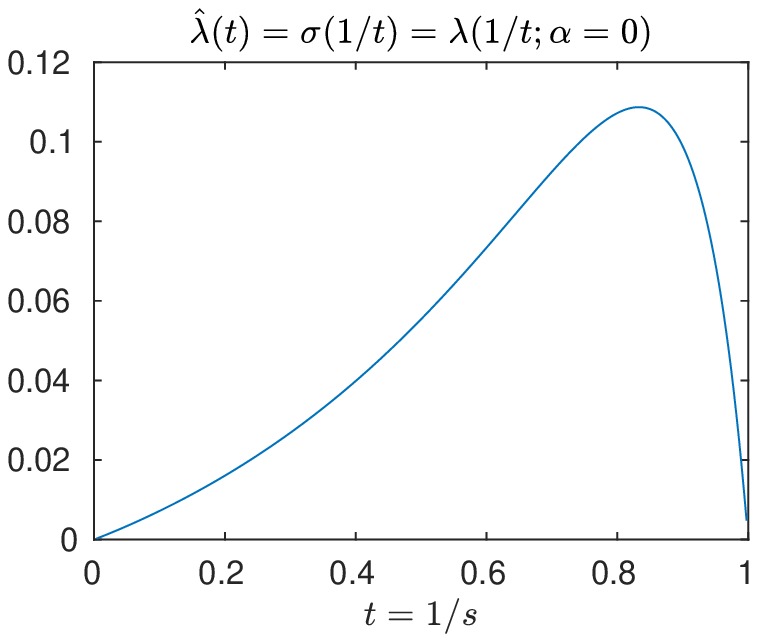}
  \caption{For the graph visualization, we set $t=1/s\in(0,1)$ as the horizontal axis, and plot the curves of {$\hat{a}(t)=a(1/t), \hat{b}(t)=b(1/t), \hat{\ld}(t)=\sigma(1/t)=\ld(1/t;\alpha=0)$ } derived from \eqref{as}-\eqref{n-lds}.}
  \label{Fig1}
\end{figure}

%\begin{remark}
%{Note that the range of $b$ is $(b_0,1)$ for some $b_0\in(0,1/2)$, since $b(s)$ has a unique critical point (which is a minimal point) in $(1,\infty)$ such that $b(1^+)=1$ and $b(\infty)=1/2$.}
%\end{remark}

%\bigskip
%%%%%%%%%%%%%
\section{Preliminaries}
\setcounter{equation}{0}

In the sequel, we let $w$ be a solution of \eqref{w-eq}-\eqref{wbc} and set
\be\label{sigma}
\sigma:=\ld\left[1+\alpha\int_{-1}^1 \frac{1}{w(y)}dy\right]^{-2}.
\ee
We first prove the following lemma.

\begin{lemma}\label{sym}
 Any solution $w$ of \eqref{w-eq}-\eqref{wbc} is symmetric with respect to $x=0$.
\end{lemma}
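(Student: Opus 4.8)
The plan is to exploit the fact that, for a \emph{fixed} solution $w$, the bracketed nonlocal term in \eqref{w-eq} is merely a positive constant. Since $w$ is continuous and strictly positive on $[-1,1]$, the quantity $\sigma$ in \eqref{sigma} is a well-defined positive number, and $w$ solves the \emph{autonomous} equation $w'' = \sigma\, w^{-2}$ on $(-1,1)$. This reduces the statement to a symmetry property of a second-order ODE that is invariant under $x\mapsto -x$ and whose boundary conditions \eqref{wbc} are themselves symmetric under this reflection.

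First I would locate the minimum of $w$. Because $w'' = \sigma w^{-2}>0$, $w$ is strictly convex and $w'$ is strictly increasing; combined with $0<w<1$ (established in \S1), the boundary conditions \eqref{wbc} give
\[
w'(-1) = w(-1)-1 < 0 < 1-w(1) = w'(1),
\]
so $w'$ has a unique zero $x_0\in(-1,1)$, with $w'<0$ on $[-1,x_0)$ and $w'>0$ on $(x_0,1]$. Hence $x_0$ is the unique (interior) minimum point of $w$, and $w$ is strictly decreasing on $[-1,x_0]$ and strictly increasing on $[x_0,1]$.

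The heart of the argument is to show $x_0=0$, and I would do this by contradiction. Assume $x_0>0$ and set $x_1:=2x_0-1\in(-1,x_0)$. The reflected function $W(x):=w(2x_0-x)$ solves the same autonomous equation on $[x_1,x_0]$ with $W(x_0)=w(x_0)$ and $W'(x_0)=0=w'(x_0)$; since $\sigma s^{-2}$ is locally Lipschitz for $s>0$ and $w$ stays positive, uniqueness for the initial value problem forces $W\equiv w$ on $[x_1,x_0]$. Evaluating at $x_1$ and using \eqref{wbc} yields $w(x_1)=w(1)$ and $w'(x_1)=-w'(1)=w(1)-1$. But on $[-1,x_0]$ the function $w$ is strictly decreasing while $w'$ is strictly increasing, so from $-1<x_1$ we get simultaneously $w(-1)>w(x_1)=w(1)$ and $w'(-1)<w'(x_1)$, i.e. $w(-1)-1<w(1)-1$ — a contradiction. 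The mirror-image argument (reflecting about $x_0$ when $x_0<0$) rules out $x_0<0$, so $x_0=0$ and $w'(0)=0$. Once this is known, $x\mapsto w(-x)$ satisfies the same autonomous equation with the same data $w(0)$, $w'(0)=0$ at the origin, and uniqueness gives $w(-x)\equiv w(x)$ on $[-1,1]$.

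I expect the only delicate point to be the bookkeeping in the reflection step: checking that the reflected interval $[2x_0-1,x_0]$ lies inside $[-1,1]$ (which holds precisely because $x_0\ge 0$ in that case) and transferring the Robin data correctly through the reflection to extract the contradiction. Should one prefer to avoid the reflection, an equivalent route uses the first integral $\tfrac12(w')^2+\sigma/w\equiv E$ for a constant $E$: it expresses $|w'|$ as a function of $w$ alone, shows that $w(1)$ and $w(-1)$ are both roots of the same strictly monotone function of the boundary value (hence equal), and then a separation-of-variables computation equates the lengths of $[-1,x_0]$ and $[x_0,1]$, forcing $x_0=0$. Either route is routine once the constancy of $\sigma$ has been observed.
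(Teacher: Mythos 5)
Your proof is correct, and it follows the same overall skeleton as the paper's: locate the critical point $x_0$ of $w$ using convexity and the sign of the Robin data, rule out $x_0\neq 0$ by a reflection-plus-monotonicity contradiction, and then conclude full symmetry. The genuine difference is the mechanism behind the reflection step. The paper first integrates the equation explicitly: it derives the first integral \eqref{wp} and then the closed-form quadrature relation \eqref{x-w}--\eqref{x-w1}, and obtains the symmetry about the critical point from strict monotonicity of the resulting function of $w$; the contradiction is then run by reflecting $x=-1$ to the point $d=2\xi+1$ and comparing $w'(d)$ with $w'(1)$ via convexity. You instead observe that, once the nonlocal factor is frozen into the constant $\sigma$ of \eqref{sigma}, the equation $w''=\sigma w^{-2}$ is autonomous and reflection-invariant, so the identity $w(2x_0-x)\equiv w(x)$ near $x_0$ (and the final symmetry $w(-x)\equiv w(x)$) follows from Picard--Lindel\"of uniqueness, $w$ being bounded away from $0$ on $[-1,1]$; your contradiction then compares $w(-1)$ with $w(1)$ and $w'(-1)$ with $w'(x_1)$ directly through the Robin conditions, which is valid. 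Your route is more elementary and avoids any explicit integration; the paper's route costs an explicit computation but that computation is not wasted, since the formula \eqref{x-w} is exactly what is reused in \S 2 to derive the relations \eqref{a-eq}--\eqref{n-lds} underlying the bifurcation diagram. One small bookkeeping remark: your ``mirror-image'' case $x_0<0$ can be dispatched in one line by noting that $x\mapsto w(-x)$ is again a solution of \eqref{w-eq}--\eqref{wbc}, so the case reduces to the one you treated.
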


\begin{proof}
Since $0<w<1$, we have
\beaa
w'(-1)=w(-1)-1<0<1-w(1)=w'(1).
\eeaa
Hence there exists $\xi\in(-1,1)$ such that $w'(\xi)=0$.

Set $a:=w(\xi)\in(0,1)$. We first show that $w$ is symmetric with respect to $\xi$.

Multiplying \eqref{w-eq} by $w'$ and integrating  from $\xi$ to $x\in(\xi,1)$, we obtain
\be\label{wp}
\frac12(w')^2(x)=\sigma\left[\frac{1}{a}-\frac{1}{w(x)}\right],\; x\in(\xi,1].
\ee
%In particular, using \eqref{wbc}, we obtain from \eqref{wp} that
%\be\label{a-eq}
%(1-b)^2=2\sigma\left(\frac{1}{a}-\frac{1}{b}\right).
%\ee
Using $w'(x)>0$ for $x\in(\xi,1]$, it follows from \eqref{wp} that
\be\label{id1}
\frac{w'(x)}{\sqrt{1/a-1/w(x)}}=\sqrt{2\sigma},\;x\in(\xi,1].
\ee
Integrating \eqref{id1} from $\xi$ to $x\in(\xi,1]$ gives
\be\label{x-w}
\sqrt{a}\left\{\sqrt{w(x)[w(x)-a]}+a\ln\left(\frac{\sqrt{{w(x)}}+\sqrt{{w(x)-a}}}{\sqrt{a}}\right)\right\}=\sqrt{2\sigma}(x-\xi).
\ee
Similarly, for $x\in [-1,\xi)$, we have
\be\label{x-w1}
\sqrt{a}\left\{\sqrt{w(x)[w(x)-a]}+a\ln\left(\frac{\sqrt{{w(x)}}+\sqrt{{w(x)-a}}}{\sqrt{a}}\right)\right\}=\sqrt{2\sigma}(\xi-x).
\ee
Since the function $\sqrt{a}\{\sqrt{y(y-a)}+a\ln(\frac{\sqrt{{y}}+\sqrt{{y-a}}}{\sqrt{a}})\}$ is strictly increasing for $y\in (a,1)$, we deduce from \eqref{x-w} and \eqref{x-w1} that
$w(\xi+c)=w(\xi-c)$, and consequently $w'(\xi+c)=-w'(\xi-c)$ for all $c \in (0,\min\{1-\xi, 1+\xi\}]$.

Next, we show that $\xi=0$ by a contradiction argument. Without loss of generality we may assume that $\xi\in(-1,0)$.
Then the reflection point of $-1$ is $d=2\xi+1\in(\xi,1)$ and we have $w'(d)=-w'(-1)=1-w(-1)=1-w(d)>0$, due to $w\in(0,1)$.
Combining with the strict convexity of $w$, we get  that $w'(1)>w'(d)$ and $w(1)>w(d)$. This leads {to} a contradiction with $w'(1)=1-w(1)$.
Therefore, $\xi=0$ and hence any solution $w$ of \eqref{w-eq}-\eqref{wbc} is symmetric with respect to $x=0$.
The lemma is proved.
\end{proof}

Recall $a=w(0)$ and $b=w(1)$. We now derive some relations between $a$, $b$ and $\ld$.
%Although some relations were obtained in \cite{DKN}, to be self-contained and for the reader's convenience, we provide some details here.
Note that \eqref{wp}, \eqref{id1} and \eqref{x-w} hold with $\xi=0$, by Lemma~\ref{sym}.

%Multiplying \eqref{w-eq} by $w'$ and integrating it from $0$ to $x\in(0,1)$, we obtain
%\be\label{wp}
%\frac12(w')^2(x)=\sigma\left[\frac{1}{a}-\frac{1}{w(x)}\right],\; x\in(0,1].
%\ee
First, using \eqref{wbc}, we obtain from \eqref{wp} that
\be\label{a-eq}
(1-b)^2=2\sigma\left(\frac{1}{a}-\frac{1}{b}\right).
\ee
%Moreover, using $w'(x)>0$ for $x\in(0,1]$, it follows from \eqref{wp} that
%\be\label{id1}
%\frac{w'(x)}{\sqrt{1/a-1/w(x)}}=\sqrt{2\sigma},\;x\in(0,1].
%\ee
%Integrating \eqref{id1} from 0 to $x\in(0,1]$ gives
%\be\label{x-w}
%\sqrt{a}\{\sqrt{w(x)[w(x)-a]}+a\ln(\frac{\sqrt{{w(x)}}+\sqrt{{w(x)-a}}}{\sqrt{a}})\}=\sqrt{2\sigma}x.
%\ee
Moreover, for $x=1$ (and $\xi=0$), we obtain from \eqref{x-w} that
\be\label{b-eq}
\sqrt{2\sigma}=\sqrt{a} {\left[\sqrt{b(b-a)}+a\ln\left(\frac{\sqrt{b}+\sqrt{b-a}}{\sqrt{a}}\right)\right]}.
\ee
Note that $0<a<b<1$. From \eqref{a-eq} and \eqref{b-eq} we deduce that
\be\label{ab}
\sqrt{b}(1-b)=\sqrt{b}(b-a)+a\sqrt{b-a}\,\ln{\left(\frac{\sqrt{b}+\sqrt{b-a}}{\sqrt{a}}\right)}.
\ee
This gives the relation between $a$ and $b$. Note that $\lambda=\sigma$ when $\alpha=0$.

When $\alpha>0$, we need to find $\ld$ in terms of $\sigma$, $a$ and $b$. Following \cite{DKN}, we first rewrite \eqref{id1} as
\be\label{id-n}
\sqrt{2\sigma}=\frac{1}{\sqrt{1/a-1/w(x)}}\frac{dw(x)}{dx},\;x\in(0,1].
\ee
Using \eqref{id-n} and \eqref{b-eq}, we compute
\bea
\int_{-1}^1\frac{1}{w(y)}dy&=&\frac{2}{\sqrt{2\sigma}}\int_a^b \frac{1}{w\sqrt{1/a-1/w}}dw=\frac{2\sqrt{a}}{\sqrt{2\sigma}}\int_a^b\frac{1}{\sqrt{w(w-a)}}dw\label{int}\\
%&=&\frac{2\sqrt{a}}{\sqrt{2\sigma}}\ln\left(\frac{2b-a+2\sqrt{b(b-a)}}{a}\right)\\
&=&\frac{2}{\sqrt{b(b-a)}+a\ln(\frac{\sqrt{b}+\sqrt{b-a}}{\sqrt{a}})}\left\{2\ln\left(\frac{\sqrt{b}+\sqrt{b-a}}{\sqrt{a}}\right)\right\}:=I(a,b).\nonumber
\eea
Then we deduce from \eqref{sigma} that
\be\label{ld-ab}
\ld=\sigma[1+\alpha I(a,b)]^2.
\ee

%%%%%%%%%

Now, we introduce the variable $s:=b/a\in(1,\infty)$.
Then it follows from \eqref{ab} that
\be\label{as}
a=a(s)=\left(2s-1+\sqrt{\frac{s-1}{s}}A(s)\right)^{-1}.
\ee
Hereafter $A(s):=\ln(\sqrt{s}+\sqrt{s-1})$.
It is clear that $a(s)$ is strictly decreasing in $s$ such that $a(1^+)=1$ and $a(+\infty)=0$.
This also implies that $s$ is a function of $a$.
Using $b=sa$, we have
\be\label{bs}
b=b(s)=s\left(2s-1+\sqrt{\frac{s-1}{s}}A(s)\right)^{-1}.
\ee
With $s$ being the independent variable, we deduce from \eqref{b-eq} and \eqref{as} that
\be\label{lds}
\sigma=\sigma(s)=\frac{1}{2}\left[\sqrt{s}\sqrt{s-1}+A(s)\right]^{2}\left(2s-1+\sqrt{\frac{s-1}{s}}A(s)\right)^{-3}.
\ee
Moreover, using \eqref{int}, \eqref{ld-ab} and \eqref{lds}, we obtain that
\be\label{n-lds}
\ld=\ld(s)=\frac{\left[\sqrt{s(s-1)}+A(s)+4\alpha\left(2s-1+\sqrt{\frac{s-1}{s}}A(s)\right)A(s)\right]^2}{2\left[2s-1+\sqrt{\frac{s-1}{s}}A(s)\right]^3}.
%%\nonumber\\
%&=&\frac{ab(1-b)^2}{2(b-a)}\left[1+\alpha\frac{4}{a}\frac{\ln(\sqrt{s}+\sqrt{s-1})}{\sqrt{s(s-1)}+\ln(\sqrt{s}+\sqrt{s-1})}\right]^2\\
%%&=&\frac{\left\{\left[\sqrt{s(s-1)}+A(s)\right]\left[1+4\alpha\left(2s-1+\sqrt{\frac{s-1}{s}}A(s)\right)\frac{A(s)}{\sqrt{s(s-1)}+A(s)}\right]\right\}^2}{2\left[2s-1+\sqrt{\frac{s-1}{s}}A(s)\right]^3}\\
%%&=&\nonumber%:=\frac{[C(s)]^2}{2[D(s)]^3}.
\ee

Conversely, given an $s\in(1,\infty)$.  Let $a=a(s)$, $\sigma=\sigma(s)$ and $\ld=\ld(s)$ be defined by \eqref{as}, \eqref{lds} and \eqref{n-lds}, respectively.
Then, with these $a$, $\sigma$ and $\lambda$, the function $w(x)$ determined uniquely by \eqref{x-w} (with $\xi=0$) for each $x\in(0,1]$ is the solution of \eqref{w-eq} with initial condition $w(0)=a$ and $w'(0)=0$
such that $w(1)=b=1-w'(1)$, where $b=b(s)$ is defined by \eqref{bs}. %{\blue Note that \eqref{id1} follows from \eqref{x-w} (with $\xi=0$) and so \eqref{int} also holds.}
By a reflection with respect to $x=0$, we obtain a solution $w$ of \eqref{w-eq}-\eqref{wbc}.
We conclude that there is a one-to-one correspondence between $\{(s,\ld(s))\mid s\in(1,\infty)\}$ and solutions of \eqref{w-eq}-\eqref{wbc}.

\begin{remark}
It is easy to check from \eqref{bs} that the function $b(s)$ is not monotone in $s$. See also Figure~\ref{Fig1}.
\end{remark}

%\bigskip
%%%%%%%%%%%%%
\section{Proof of Theorem~\ref{th:main}}
\setcounter{equation}{0}

This section is devoted to the proof of our main theorem, Theorem~\ref{th:main}.
Note that \eqref{n-lds} is valid for all $\alpha\ge 0$.
%Hereafter we shall use $\ld=\ld(s)$ for notational simplicity.
Note that $\ld(s)>0$ for all $s\in(1,\infty)$.
Also, it is easy to check from \eqref{n-lds} that $\ld(1^+)=\ld(+\infty)=0$.

%%%%%%%%%%%%%

Given a fixed $\alpha\ge 0$. Writing \eqref{n-lds} as $\ld=\ld(s)={[C(s)]^2}/\{2[D(s)]^3\}$, where
\beaa
&&C(s):=\sqrt{s(s-1)}+A(s)+4\alpha\left(2s-1+\sqrt{\frac{s-1}{s}}A(s)\right)A(s),\\
&&D(s):=2s-1+\sqrt{\frac{s-1}{s}}A(s).
\eeaa
Then $\ld'(s)=0$ if and only if $2C'(s)D(s)=3C(s)D'(s)$.
To proceed further, we first compute
\beaa
A'(s)&=&\frac{1}{2\sqrt{s(s-1)}},\quad D'(s)=2+\frac{1}{2s}+\frac{A(s)}{2s\sqrt{s(s-1)}},\\
C'(s)&=&\frac{s}{\sqrt{s(s-1)}}+4\alpha\left[2+\frac{1}{2s}+\frac{A(s)}{2s\sqrt{s(s-1)}}\right]A(s)\\
&&\; +2\alpha\left[2s-1+\sqrt{\frac{s-1}{s}}A(s)\right]\frac{1}{\sqrt{s(s-1)}}.
\eeaa
{Hence $\ld'(s)=0$ if and only if $P_\alpha(s):=E(s)+\alpha F(s)=0$, where
\beaa
&&E(s):=\frac{3}{2s\sqrt{s(s-1)}}A^2(s)+\left(4+\frac{3}{s}\right)A(s)+\frac{4s^2-5s-3}{2\sqrt{s(s-1)}},\\
&&F(s):=\frac{2}{s^2}A^3(s)+\frac{2(4s-3)}{\sqrt{s(s-1)}}A^2(s)+\frac{2(2s-1)(4s-3)}{s}A(s)-\frac{4(2s-1)^2}{\sqrt{s(s-1)}}.
\eeaa

\begin{lemma}\label{inc}
Both $E(s)$ and $F(s)$ are strictly increasing for $s>1$.
\end{lemma}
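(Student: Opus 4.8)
The plan is to prove that $E'(s)>0$ and $F'(s)>0$ for every $s>1$ by a direct computation of the derivatives, the one genuinely non‑mechanical ingredient being sharp two‑sided control on $A(s)$. Since $e^{A(s)}=\sqrt s+\sqrt{s-1}$, one has $\cosh A(s)=\sqrt s$, $\sinh A(s)=\sqrt{s-1}$ and $\tanh A(s)=\sqrt{(s-1)/s}$; together with the elementary inequalities $\tanh\xi<\xi<\sinh\xi$ for $\xi>0$ this gives
\[
\sqrt{\tfrac{s-1}{s}}\ <\ A(s)\ <\ \sqrt{s-1}\qquad(s>1).\tag{$\ast$}
\]
The crucial feature of $(\ast)$ is that its two sides pinch together as $s\to1^+$ (both behave like $\sqrt{s-1}$), which is what will make the estimates near the left endpoint tight enough.

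First I would differentiate, using the given $A'(s)=\bigl(2\sqrt{s(s-1)}\bigr)^{-1}$, so that $2AA'=A/\sqrt{s(s-1)}$ and $3A^2A'=\tfrac32A^2/\sqrt{s(s-1)}$. Writing $q:=\sqrt{s(s-1)}$ and clearing denominators, a routine calculation gives, for all $s>1$,
\[
4sq^{3}\,E'(s)=-3(4s-3)A^{2}+6q(3-2s)A+s\bigl(8s^{3}-4s^{2}+9s-9\bigr),
\]
\[
s^{3}q^{3}\,F'(s)=-4q^{3}A^{3}+s^{2}(2s^{2}-3)A^{2}+2qs^{2}(2s-1)(4s^{2}-3)A-s^{3}(2s-1)(4s-5)(s+1),
\]
where the prefactors $4sq^{3}$ and $s^{3}q^{3}$ are positive; so it suffices to show that each right‑hand side is positive. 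Note $8s^{3}-4s^{2}+9s-9>0$ for $s\ge1$ (it equals $4$ at $s=1$ and has a positive derivative), so the last term of the first identity is positive; and both right‑hand sides tend to a positive constant (namely $4$ and $2$) as $s\to1^{+}$.

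For $E'(s)>0$ no case distinction is needed. By $(\ast)$, $18qA>18(s-1)$ and $qA<\sqrt s\,(s-1)$, hence the single lower bound
\[
6q(3-2s)A=18qA-12sqA>(s-1)\bigl(18-12s^{3/2}\bigr)\qquad(s>1).
\]
Substituting this and $A^{2}<s-1$ into the first identity, and then setting $t=\sqrt s$, turns ``$4sq^{3}E'(s)>0$'' into a polynomial inequality $M(t)>0$ for $t>1$ with $M(1)=4$; this in turn follows from $M'(t)>0$ on $[1,\infty)$, which after a further differentiation reduces to the positivity of an explicit polynomial whose leading behaviour dominates the rest — a routine check.

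For $F'(s)>0$ the same philosophy applies but the bookkeeping is heavier, because all three lower‑order coefficients change sign: $s^{2}(2s^{2}-3)$ and the $A$‑coefficient at $s=\sqrt{3/2}$, and the constant term at $s=5/4$. I would split the range at $s=\sqrt{3/2}$ and again at $s=5/4$, and on each subinterval bound every $A$‑monomial by $(\ast)$, orienting each inequality so as to produce a \emph{lower} bound for $s^{3}q^{3}F'(s)$: thus $A^{3}<(s-1)^{3/2}$ always (its coefficient is negative), while $A$ and $A^{2}$ are bounded above or below according to the sign of their coefficients. After the substitution $t=\sqrt s$ the only surviving radical is an even power of $\sqrt{t^{2}-1}$, so one again lands on a polynomial inequality in $t$, to be verified by its value at $t=1$ together with the sign of its first (and, if needed, second) derivative. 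The real obstacle is the range $s>5/4$: there the constant term $-s^{3}(2s-1)(4s-5)(s+1)$ is \emph{negative}, so $s^{3}q^{3}F'(s)$ is no longer a sum of manifestly nonnegative pieces — one must instead show that $A(s)$ lies strictly between the two positive roots of the cubic in $A$, which forces the simultaneous use of the upper and the lower bound in $(\ast)$. Moreover, numerics suggest $(\ast)$ is too crude just above $s=5/4$, so this step will likely require sharpening the upper bound, e.g.\ via the half‑angle identity $A(s)=2\,\mathrm{arcsinh}\sqrt{(\sqrt s-1)/2}$ and $\mathrm{arcsinh}\,\xi\le\xi$, giving $A(s)\le\sqrt{2(\sqrt s-1)}$. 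Granting a suitable such refinement, the remainder is elementary algebra.
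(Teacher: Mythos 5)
Your handling of $E$ is essentially correct: your cleared form $4sq^{3}E'(s)=-3(4s-3)A^{2}+6q(3-2s)A+s(8s^{3}-4s^{2}+9s-9)$ agrees with the paper's $G(s)$, the bounds $qA>s-1$ and $qA<\sqrt{s}\,(s-1)$ do follow from $(\ast)$, and the resulting polynomial $M(t)=8t^{8}-4t^{6}-12t^{5}-3t^{4}+12t^{3}+30t^{2}-27$ indeed satisfies $M(1)=4$ and $M'(t)>0$ on $[1,\infty)$, so this half can be completed (though it is heavier than necessary: the paper gets $E'>0$ from the single upper bound $A<\sqrt{s-1}\le\sqrt{s(s-1)}$, with no lower bound on $A$ and no substitution $t=\sqrt{s}$). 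Two factual remarks on your $F'$: your cleared identity is right --- the $A^{2}$-coefficient is $s^{2}(2s^{2}-3)$, and the paper's displayed $2s^{3}-3$ is a typo that does not affect its argument --- but your sign bookkeeping is off: the $A$-coefficient $2qs^{2}(2s-1)(4s^{2}-3)$ is positive for \emph{every} $s>1$ (the factor $4s^{2}-3$ vanishes at $s=\sqrt{3}/2<1$), so only the $A^{2}$-coefficient (at $s=\sqrt{3/2}$) and the constant term (at $s=5/4$) change sign.

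The genuine gap is the $F$ half, and you flag it yourself: on the hardest range $s>5/4$, where the constant term $-s^{3}(2s-1)(4s-5)(s+1)$ is negative, you report that $(\ast)$ appears numerically insufficient and you only ``grant a suitable refinement'' such as $A\le\sqrt{2(\sqrt{s}-1)}$, without producing or verifying the polynomial inequalities it is supposed to yield. As written this part is a plan, not a proof, and the lemma is not established. The paper closes exactly this step by a different organization of the terms rather than by sharper bounds on $A$: it splits $F'=G_{1}+G_{2}$, where $G_{1}:=-\frac{4}{s^{3}}A^{3}-\frac{1}{s^{2}(s-1)\sqrt{s(s-1)}}A^{2}+\frac{2(2s-1)}{s^{2}(s-1)}A>0$ follows from the crude upper bounds $A<\sqrt{s-1}<\sqrt{s(s-1)}$ alone, and all of the negative constant term is absorbed into $G_{2}$, which is written as a nonnegative $A^{2}$-term plus $\frac{8(2s-1)(s+1)}{s^{2}}\,g(s)$ with $g(s)=A-\frac{s(4s-5)}{8(s-1)\sqrt{s(s-1)}}$; an explicit computation of $g'$ shows $g$ attains its global minimum on $(1,\infty)$ at $s=3/2$, and $g(3/2)>0$. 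That single one-variable minimization replaces your case splits at $s=\sqrt{3/2}$ and $s=5/4$ and the unproven sharpened bound. To repair your argument you must either actually carry out (and verify) the refined estimates on $s>5/4$ --- which your own numerics suggest cannot be done with $(\ast)$ as stated --- or regroup the terms so that the negative piece is dominated by a quantity you can minimize exactly, as the paper does.
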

\pf
First, we compute
\beaa
E'(s)&=&\frac{8s^3-4s^2+9s-9}{4s(s-1)\sqrt{s(s-1)}}-\frac{6s-9}{2s^2(s-1)}A-\frac{3(4s-3)}{4s^2(s-1)\sqrt{s(s-1)}}A^2\\
&:=&\frac{1}{4s^2(s-1)\sqrt{s(s-1)}}G(s), \mbox{ where}
\eeaa
\beaa
G(s):=(8s^4-4s^3+9s^2-9s)-2(6s-9)\sqrt{s(s-1)}A-3(4s-3)A^2.
\eeaa
Using the property that $A(s)<\sqrt{s-1}<\sqrt{s(s-1)}$ for $s>1$, we have
\beaa
G(s)&>&(8s^4-4s^3+9s^2-9s)+{6\sqrt{s(s-1)}A-2( 6s-6)[s(s-1)]}-3(4s-3)(s-1)\\
 &>&8s^4-16s^3+21s^2-9=8s^2(s-1)^2+(13s^2-9)>0,\; \forall s>1.
\eeaa
{Hence $E'(s)>0$ for $s>1$.}

Next, we consider the function $F(s)$. As before, we first compute
\beaa
F'(s)&=&-\frac{4}{s^{3}}A^3+\frac{2s^3-3}{s^2(s-1)\sqrt{s(s-1)}}A^2\\
&&\quad +\frac{2(2s-1)(4s^2-3)}{s^2(s-1)}A-\frac{(4s-5)(2s-1)(s+1)}{s(s-1)\sqrt{s(s-1)}}.
\eeaa
We rewrite $F'(s)=G_1(s)+G_2(s)$, where
\beaa
&&G_1(s):=-\frac{4}{s^{3}}A^3-\frac{1}{s^2(s-1)\sqrt{s(s-1)}}A^2+\frac{2(2s-1)}{s^2(s-1)}A,\\
&&G_2(s):=\frac{2s^3-2}{s^2(s-1)\sqrt{s(s-1)}}A^2+\frac{2(2s-1)(4s^2-4)}{s^2(s-1)}A-\frac{(4s-5)(2s-1)(s+1)}{s(s-1)\sqrt{s(s-1)}}.
\eeaa
We claim that both $G_1(s)$ and $G_2(s)$ are positive for $s>1$.

For $G_1$, we use again $A(s)<\sqrt{s-1}<\sqrt{s(s-1)}$ and obtain
\beaa
G_1(s)>\frac{A}{s^3(s-1)}\{2s(2s-1)-4(s-1)^2-s\}=\frac{A}{s^3(s-1)}(5s-4)>0
\eeaa
for $s>1$.
For $G_2$, we rewrite it as
\beaa
G_2(s)=\frac{2s^3-2}{s^2(s-1)\sqrt{s(s-1)}}A^2+\frac{8(2s-1)(s+1)}{s^2}g(s),\;g(s):=A-\frac{s(4s-5)}{8(s-1)\sqrt{s(s-1)}}.
\eeaa
We compute
\beaa
g'(s)=\frac{8s^2-14s+3}{16(s-1)^2\sqrt{s(s-1)}}.
\eeaa
It is easy to see that $g(s)$ has a global minimum at $s=3/2$ for $s\in(1,\infty)$. Since $g(3/2)>0$, so $g(s)>0$ for $s>1$.
Hence $G_2(s)>0$ for $s>1$. We conclude that $F'(s)>0$ for $s>1$ and the lemma is proved.}
\qed

{Now, we are ready to prove Theorem~\ref{th:main} as follows.

\noindent{\bf Proof of Theorem~\ref{th:main}.} Given a fixed $\alpha\ge 0$. First, we note that $P_\alpha(1^+)=-\infty$ and $P_\alpha(\infty)=\infty$.
Since $P_\alpha(s)$ is strictly increasing in $s$, by Lemma~\ref{inc}, there is a unique $s^*\in(1,\infty)$ (depending on $\alpha$) such that $P_\alpha(s^*)=0$.
Hence $\ld'(s)=0$ if and only if $s=s^*$.
Therefore, Theorem~\ref{th:main} follows immediately with $\ld^*:=\ld(s^*)$, since $\ld(s)>0$ for $s>1$ and $\ld(1^+)=\ld(+\infty)=0$.}
\qed

%%%%%%%%%%%%%%%%%%%%%%%%%%%%
\section{Acknowledgements}
This work was partially supported by the Ministry of Science and Technology of Taiwan under the grants 108-2115-M-032-006-MY3 (JSG) and 107-2115-M-194-002-MY2 (CJW).

%%%%%%%%%%%%%%%%%%%%%%%%%%%%

%%%%%%%%%%%%Appendix%%%%%%%%

\end{document}